\tiny\color{gray},
\newtheorem{theorem}{Theorem}[section]
\newtheorem{corollary}[theorem]{Corollary}
\newtheorem{definition}[theorem]{Definition}
\newtheorem{remark}{Remark}
\begin{document}
\title[Construction of generalization of the Leibnitz numbers...]{%
{\Large Construction of a generalization of the Leibnitz numbers and their properties}}

\author{YILMAZ SIMSEK}
\address{Department of Mathematics, Faculty of Science University of Akdeniz
TR-07058 Antalya-TURKEY}
\email{ysimsek@akdeniz.edu.tr}

\begin{abstract}
The aim of this paper is to give a novel generalization of the Leibnitz numbers derived from application of the Beta function to the modification for the Bernstein basis functions. We also give some properties of the Leibnitz numbers with the aid of their generating functions derived from the Volkenborn integral on the set of $p$-adic integers. We also give some novel identities and relations involving the Leibnitz numbers, the Daehee numbers, the Changhee numbers, inverse binomial coefficients, and combinatorial sums. Finally, by coding computation formula for the generalization of the Leibnitz numbers in Mathematica 12.0 with their implementation, we compute few values of these numbers with their tables. Finally, by using the applications of Volkenborn integral to Mahler coefficients, we derive some novel formulas involving the Leibnitz numbers.

\vspace{2mm}

\noindent \textsc{2010 Mathematics Subject Classification.} 05A15, 11B65, 11B83, 11S80, 26C05, 33B15.

\vspace{2mm}

\noindent \textsc{Keywords and phrases.} Leibnitz numbers, Daehee numbers,
Stirling numbers, Special numbers and polynomials, Generating functions,
Beta function, Bernstein basis functions, Volkenborn integral, $p$-adic
integers.
\end{abstract}

\thanks{\textbf{The present paper was supported by the \textit{Scientific Research Project Administration of Akdeniz University.}}}	
			
\maketitle






\section{Introduction}


\subsection{About Professor Lee Chae Jang:}

I met Professor Lee Chae Jang at the international conference organized by
the Jangjeon Mathematical Society in Mysore, India, in 2003. Professor
Taekyun Kim, Professor Seog-Hoon Rim, and Professor Lee Chae Jang are
founders of the Jangjeon Mathematics Society. This society has been not only
published two international mathematical journals including the Advanced
Studies in Contemporary Mathematics (ASCM) and the Proceedings of the
Jangjeon Mathematical Society (PJMS), but also organized International
Conference of The Jangjeon Mathematical Society. Until I met Professor Jang,
he has always contributed to mathematics in a disciplined manner and has
always been an example for young scientists, without losing his passion for
mathematics. I had a lot of interactions and views with Professor Jang, both
culturally and from a mathematical point of view.

\textit{For Professor Jang's great contribution to mathematics and his
	guidance for young scientists;}

\textit{I dedicate this article to his retirement with my best wishes.}

\textit{I wish the rest of his life to be happy, fruitful, success and
	healthy with all his family and friends.}

It is known by many researchers, who work on the subject of special numbers and their applications in recent years, that this subject has became among the leading topics of mathematics and especially analytic number theory. The \textit{Leibnitz numbers}, known as the famous German mathematician Gottfried Wilhelm Leibnitz (1646 - 1716), are considered in this paper. These numbers, which have rarely been addressed until now, are studied by using the techniques of generating functions and their Volkenborn integral representation in this paper. These numbers are also closely related to the \textit{Leibniz Harmoic Triangle} numbers. The denominators of some of these numbers are also directly related to the \textit{pronic numbers}. Within the scope of this study, it has been proved that these numbers are also related to Daehee numbers, Changhee numbers and the combinatorial numbers we found.
\subsection{Definitions and Notations}

Let $\mathbb{N}$ and $\mathbb{C}$ denote the set of natural numbers and the set of complex numbers, respectively, and also $\mathbb{N}_{0}=%
\mathbb{N}\cup \left\{ 0\right\} $.

 Let $x_{(n)}=x\left(x-1\right)\dots\left(x-n+1\right)$ with $x_{(0)}=1$ and $n\in \mathbb{N}$.

The Leibnitz numbers, $\boldsymbol{l}\left( n,k\right) $, are defined by
\begin{equation}
\boldsymbol{l}\left( n,k\right) =\frac{1}{\left( n+1\right) \binom{n}{k}}
\label{ExpLeib}
\end{equation}%
whose generating function is given as follows:%
\begin{equation}
\mathcal{G}_{l}\left( t,u\right) :=\sum\limits_{n=0}^{\infty
}\sum\limits_{k=0}^{n}\boldsymbol{l}\left( n,k\right) t^{k}u^{n}=\frac{\log
	\left( 1-u\right) +\log \left( 1-ut\right) }{\left( 1-u\right) \left(
	1-tu\right) -1},  \label{A1}
\end{equation}%
where $|u|<1$; $k=0,1,2,\ldots ,n$ and $n\in \mathbb{N}_{0}$ (\textit{cf}. \cite[Exercise 16, p. 127]{Charalambos}).

As seen from the equation (\ref{A1}), the function $\mathcal{G}_{l}\left( t,u\right)$ is the generating function for the polynomials: $$L_{n}(t):=\sum\limits_{k=0}^{n}\boldsymbol{l}\left( n,k\right) t^{k}$$ whose coefficients are the Leibnitz numbers and also whose degree is $n$.
That is, the ordinary generating function for the polynomials $L_{n}(t)$ is given as follows:
\begin{equation}
\mathcal{G}_{l}\left( t,u\right)=\sum_{n=0}^{\infty}L_{n}(t)u^n.
\label{GF-P}
\end{equation}

Observe that
\begin{eqnarray*}
L_{n}(1)=\sum\limits_{k=0}^{n}\boldsymbol{l}\left( n,k\right),
\end{eqnarray*}
and
\begin{eqnarray*}
	L_{n}(0)=\boldsymbol{l}\left( n,0\right)=\frac{1}{n+1}.
\end{eqnarray*}

The Leibnitz numbers, $\boldsymbol{l}\left( n,k\right)$, are also given by
following finite combinatorial sum:%
\begin{equation}
\boldsymbol{l}\left( n,k\right) =\sum\limits_{v=0}^{k}\left( -1\right) ^{k-v}%
\frac{1}{n-v+1}\binom{k}{v},  \label{SumLeib}
\end{equation}%
where $k=0,1,2,\ldots ,n$ and $n\in \mathbb{N}_{0}$ (\textit{cf}. \cite[Exercise
16, p. 127]{Charalambos}).

With the initial condition%
\begin{equation*}
\boldsymbol{l}\left( n,0\right) =\frac{1}{n+1},
\end{equation*}%
the Leibnitz numbers satisfy the following recurrence relation: 
\begin{equation*}
\boldsymbol{l}\left( n,k\right) =\frac{k}{n+1}\boldsymbol{l}\left(
n-1,k-1\right) ,
\end{equation*}%
where $k=1,2,\ldots ,n$ and $n\in \mathbb{N}$ (\textit{cf}. \cite[Exercise 16,
p. 127]{Charalambos}).

In \cite{LeibnitzZhao2010}, whose content could not be reached but whose existence is known, Zhao and Wuyungaowa claimed in its abstract that they gave a series of identities involving Leibniz numbers, Stirling numbers, harmonic numbers, and arctan numbers by making use of generating functions. They also claimed that give the asymptotic expansion of certain sums related to Leibniz numbers by the Laplace’s method. On the other hand, there is no data to comment on whether the Leibniz numbers mentioned there and the \textit{Leibnitz numbers} discussed in this study point to the same concept or the relationship of the results. Due to the expression \textit{Leibniz numbers} in the abstract of the relevant study, we cite it here. 

The Daehee numbers, $D_{n}$, are defined by%
\begin{equation}
\mathcal{G}_{D}\left( u\right) :=\frac{\log \left( 1+u\right) }{u}%
=\sum\limits_{n=0}^{\infty }D_{n}\frac{u^{n}}{n!}  \label{Dahee}
\end{equation}%
where 
\begin{equation}
D_{n}=(-1)^{n}\frac{n!}{n+1}  \label{D}
\end{equation}%
(\textit{cf}. \cite{KimDahee}).

The Changhee numbers, $Ch_{n}$, are defined by%
\begin{equation}
\mathcal{G}_{Ch}\left( u\right) :=\frac{2}{2+u}=\sum\limits_{n=0}^{\infty
}Ch_{n}\frac{u^{n}}{n!}  \label{Changhee}
\end{equation}%
where 
\begin{equation}
Ch_{n}=(-1)^{n}\frac{n!}{2^{n}}  \label{C}
\end{equation}%
(\textit{cf}. \cite{KimChange}).

The numbers, $Y_{n}\left( \lambda \right) $, are defined by 
\begin{equation}
\mathcal{G}_{Y}\left( u,\lambda \right) :=\frac{2}{\lambda \left( 1+\lambda
	u\right) -1}=\sum\limits_{n=0}^{\infty }Y_{n}\left( \lambda \right) \frac{%
	u^{n}}{n!}  \label{YPoly}
\end{equation}%
where 
\begin{equation}
Y_{n}\left( \lambda \right) =\left( -1\right) ^{n}\frac{2n!}{\lambda -1}%
\left( \frac{\lambda ^{2}}{\lambda -1}\right) ^{n}  \label{YExp}
\end{equation}%
(\textit{cf}. \cite{simsekTurkish}, \cite{KucukogluJNT2017}).

The relation between Changhee numbers and the numbers $Y_{n}\left(
\lambda\right)$ is given as follows: 
\begin{equation}
Ch_{n}=\left(-1\right)^{n+1} Y_n \left(-1\right)  \label{R-CYn}
\end{equation}
(\textit{cf}. \cite{simsekTurkish}, \cite{KucukogluJNT2017}).

Next, we summarize the content of this paper as follows:

In Section 2, we give combinatorial identities and relations related to the Leibnitz numbers, Daehee numbers and Changhee numbers. We also give some computational formulas for these numbers.

In Section 3, we give further remarks and observations on the Leibnitz numbers. Moreover, by using finite sums derived from application of the integral to
the modification for the Bernstein basis functions, we introduce a generalization of the Leibnitz numbers.

In Section 4, we give Mathematica implementation of the generalized Leibnitz numbers and by this implementation, we compute few values of these numbers with their tables.

In Section 5, by applying Volkenborn integral on the set of $p$-adic integers, we derive some novel formulas involving the Leibnitz numbers.

\section{Combinatorial identities and relations involving the Leibnitz numbers, Daehee numbers and Changhee numbers}
By using functional equations of the generating functions for the the Leibnitz numbers, the Daehee numbers and special series, we find many formulas, identities and relations involvilg the Changhee numbers and combinatorial numbers and sums.

By using (\ref{A1}), we get 
\begin{equation}
\sum\limits_{n=0}^{\infty }\sum\limits_{k=0}^{n}\boldsymbol{l}\left(
n,k\right) t^{k}u^{n} =\frac{1}{t+1}\sum_{n=0}^{\infty} \sum_{k=0}^{n} \frac{%
	1}{n-k+1} \left(\frac{t}{t+1}\right)^k \left(1+t^{n-k}\right)u^n.
\end{equation}
Comparing the coefficients $u^n$ on both-sides of the above equation, we get
the following theorem:

\begin{theorem}
	\begin{equation}
	\sum\limits_{k=0}^{n}\boldsymbol{l}\left( n,k\right) t^{k}=\frac{1}{t+1}%
	\sum_{k=0}^{n} \frac{1}{n-k+1} \left(\frac{t}{t+1}\right)^k
	\left(1+t^{n-k}\right).  \label{Th-ik1}
	\end{equation}
\end{theorem}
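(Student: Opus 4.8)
The plan is to prove (\ref{Th-ik1}) exactly as its placement in the text dictates: take the generating-function identity displayed immediately above the statement as the given input, and match the coefficients of $u^n$ on its two sides. Two power series in $u$ coincide precisely when all their coefficients agree, so (\ref{Th-ik1}) is nothing more than the equality of the $u^n$-coefficients of that identity; the proof therefore reduces to reading those coefficients off and invoking the uniqueness of the power-series expansion.

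Concretely, I would first record the coefficient of $u^n$ on the left. By (\ref{A1}) together with (\ref{GF-P}), the left-hand side is $\mathcal{G}_l(t,u)=\sum_{n\ge 0}L_n(t)\,u^n$ with $L_n(t)=\sum_{k=0}^n \boldsymbol{l}(n,k)\,t^k$, so its coefficient of $u^n$ is $\sum_{k=0}^n \boldsymbol{l}(n,k)\,t^k$. I would then read the coefficient of $u^n$ on the right: the displayed right-hand side is already organized as $\sum_{n\ge 0}c_n(t)\,u^n$ with
\[
c_n(t)=\frac{1}{t+1}\sum_{k=0}^n \frac{1}{n-k+1}\left(\frac{t}{t+1}\right)^k\bigl(1+t^{\,n-k}\bigr),
\]
so its $u^n$-coefficient is precisely $c_n(t)$. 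Fixing $t\neq -1$ and regarding both sides as analytic functions of $u$ on a small disc about $u=0$ (equivalently, as elements of $\mathbb{C}(t)[[u]]$), equality of the two series forces $\sum_{k=0}^n \boldsymbol{l}(n,k)\,t^k=c_n(t)$ for every $n\in\mathbb{N}_0$, which is exactly (\ref{Th-ik1}).

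The one nontrivial input—and hence the step I would flag as the main obstacle if a fully self-contained argument is wanted—is the generating-function identity displayed just above the statement, namely the passage ``by using (\ref{A1})'' from the closed form to the double sum; taking that identity as given (it is recorded in the text above, and we are entitled to assume it), the coefficient comparison of the previous paragraph yields (\ref{Th-ik1}) immediately. If one wishes to derive that identity from (\ref{A1}) as well, the route I would follow is to expand the numerator $\log(1-u)+\log(1-ut)$ as a power series in $u$, to factor the denominator as
\[
(1-u)(1-tu)-1=-u(t+1)\left(1-\tfrac{t}{t+1}\,u\right),
\]
to expand the last factor by the geometric series $\bigl(1-\tfrac{t}{t+1}u\bigr)^{-1}=\sum_{k\ge 0}\bigl(\tfrac{t}{t+1}\bigr)^k u^k$, and then to cancel the surviving factor of $u$ and form the Cauchy product of the two resulting series, reading off the coefficient of $u^n$. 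This Cauchy product is where the real work lies and is the step I would expect to require the most care; throughout one must restrict to $t\neq -1$ and to a disc in $u$ on which all the series converge, so that the term-by-term rearrangements and the final coefficient extraction are legitimate.
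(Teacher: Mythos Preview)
Your proposal is correct and follows precisely the paper's own argument: the paper records the displayed generating-function identity (obtained ``by using (\ref{A1})'') and then proves the theorem by comparing coefficients of $u^n$, exactly as you do. Your sketch of how to pass from (\ref{A1}) to that displayed identity---factoring the denominator as $-u(t+1)\bigl(1-\tfrac{t}{t+1}u\bigr)$, expanding the logarithms and the geometric factor, and forming the Cauchy product---is in fact more detailed than what the paper supplies; note that carrying it out produces the exponent $n-k+1$ on $t$ (consistent with the equivalent form (\ref{2})) rather than $n-k$.
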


Substituting $t=1$ into (\ref{Th-ik1}), we get the finite summation of the
Leibnitz numbers as in the following corollary:

\begin{corollary}
	\begin{equation}
	\sum\limits_{k=0}^{n}\boldsymbol{l}\left( n,k\right)=\sum_{k=0}^{n} \frac{1}{%
		\left(n-k+1\right)2^k}.  \label{Th-ik2}
	\end{equation}
\end{corollary}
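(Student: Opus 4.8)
The plan is to derive the corollary as the $t=1$ specialization of equation (\ref{Th-ik1}). First I would evaluate the left-hand side of (\ref{Th-ik1}) at $t=1$: since $1^{k}=1$ for every $k$, it becomes $\sum_{k=0}^{n}\boldsymbol{l}(n,k)$, which is exactly $L_{n}(1)$, the quantity on the left of (\ref{Th-ik2}).

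Next I would simplify the right-hand side of (\ref{Th-ik1}) at $t=1$. The $t$-dependent pieces collapse: $t+1=2$, so the prefactor equals $\tfrac12$; $\bigl(t/(t+1)\bigr)^{k}=(1/2)^{k}=1/2^{k}$; and $1+t^{n-k}=1+1=2$ for every $k=0,1,\dots,n$ (including $k=n$, where $t^{0}=1$). Substituting these into the sum yields
\begin{equation*}
\sum_{k=0}^{n}\boldsymbol{l}(n,k)=\frac{1}{2}\sum_{k=0}^{n}\frac{1}{n-k+1}\cdot\frac{1}{2^{k}}\cdot 2=\sum_{k=0}^{n}\frac{1}{(n-k+1)\,2^{k}},
\end{equation*}
which is precisely the right-hand side of (\ref{Th-ik2}), completing the argument.

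Since this is essentially a one-line substitution, I do not anticipate any genuine obstacle; the only point deserving a moment of care is confirming that $1+t^{n-k}$ really does reduce to the constant $2$ uniformly in $k$ over the whole summation range, so that it may be pulled out of the sum and cancel the prefactor $\tfrac12$. As an independent sanity check one could re-index the right-hand side by $j=n-k$ to rewrite it as $\frac{1}{2^{n}}\sum_{j=0}^{n}\frac{2^{j}}{j+1}$, and verify the identity for small $n$ directly from (\ref{ExpLeib}) (or from (\ref{SumLeib})).
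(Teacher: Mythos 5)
Your proposal is correct and follows exactly the paper's route: the corollary is obtained by substituting $t=1$ into (\ref{Th-ik1}), whereupon the factor $1+t^{n-k}=2$ cancels the prefactor $\tfrac{1}{2}$. The paper states this specialization without further detail, and your computation fills it in accurately.
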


\begin{theorem}
	\begin{equation}
	\sum\limits_{k=0}^{n}\boldsymbol{l}\left( n,k\right) t^{k}=\frac{1}{1+t}%
	\sum\limits_{j=0}^{n}\left( -1\right) ^{j}\frac{D_{j}}{j!}\left( \frac{t}{1+t%
	}\right) ^{n-j}\left( 1+t^{j+1}\right) .  \label{1a}
	\end{equation}
\end{theorem}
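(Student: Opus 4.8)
The plan is to read off $(\ref{1a})$ directly from the generating-function identity $(\ref{A1})$, using only the explicit evaluation $(\ref{D})$ of the Daehee numbers. Since $(\ref{D})$ gives $(-1)^{j}D_{j}/j!=1/(j+1)$, the right-hand side of $(\ref{1a})$ is equal to
\begin{equation*}
\frac{1}{1+t}\sum_{j=0}^{n}\frac{1}{j+1}\left(\frac{t}{1+t}\right)^{n-j}\left(1+t^{j+1}\right),
\end{equation*}
so it suffices to prove that this quantity equals $L_{n}(t)=\sum_{k=0}^{n}\boldsymbol{l}(n,k)t^{k}$.

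First I would simplify the closed form in $(\ref{A1})$. Expanding the denominator gives $(1-u)(1-tu)-1=-u(1+t-tu)=-u(1+t)\left(1-\frac{tu}{1+t}\right)$, so that
\begin{equation*}
\mathcal{G}_{l}(t,u)=\frac{-\log(1-u)-\log(1-tu)}{u(1+t)}\cdot\frac{1}{1-\frac{tu}{1+t}}.
\end{equation*}
Next I would expand both analytic factors as power series in $u$. From $-\log(1-u)-\log(1-tu)=\sum_{m\ge 1}\frac{1+t^{m}}{m}u^{m}$, dividing by $u$ and shifting the summation index yields
\begin{equation*}
\frac{-\log(1-u)-\log(1-tu)}{u}=\sum_{m\ge 0}\frac{1+t^{m+1}}{m+1}u^{m},
\end{equation*}
while $\left(1-\frac{tu}{1+t}\right)^{-1}=\sum_{\ell\ge 0}\left(\frac{t}{1+t}\right)^{\ell}u^{\ell}$, valid for $\left|tu/(1+t)\right|<1$.

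Then I would form the Cauchy product of these two series, multiply by $1/(1+t)$, and extract the coefficient of $u^{n}$; this coefficient is exactly $\frac{1}{1+t}\sum_{j=0}^{n}\frac{1+t^{j+1}}{j+1}\left(\frac{t}{1+t}\right)^{n-j}$. On the other hand, by $(\ref{GF-P})$ the coefficient of $u^{n}$ in $\mathcal{G}_{l}(t,u)$ equals $L_{n}(t)=\sum_{k=0}^{n}\boldsymbol{l}(n,k)t^{k}$, so the two expressions match; re-expressing $1/(j+1)$ as $(-1)^{j}D_{j}/j!$ via $(\ref{D})$ then produces $(\ref{1a})$.

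The manipulations are routine, and the hard part is purely bookkeeping: the index shift in the logarithmic factor. Because $-\log(1-tu)/u$ begins with the term $t$ rather than $1$, its coefficient of $u^{m}$ is $(1+t^{m+1})/(m+1)$ and not $(1+t^{m})/(m+1)$, and carrying this $t^{j+1}$ (rather than $t^{j}$) correctly through the Cauchy product is the step to watch. The convergence conditions $|u|<1$ and $\left|tu/(1+t)\right|<1$ are needed only to justify rearranging the series and hold for all sufficiently small $u$.
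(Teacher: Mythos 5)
Your proposal is correct and is essentially the paper's own argument: the paper likewise rewrites $(1-u)(1-tu)-1=-u(1+t-ut)$ to obtain the functional equation $\mathcal{G}_{l}(t,u)=\bigl(\mathcal{G}_{D}(-u)+t\,\mathcal{G}_{D}(-ut)\bigr)/(1+t-ut)$, expands $1/(1+t-ut)$ as a geometric series in $tu/(1+t)$, and extracts the coefficient of $u^{n}$ via the same Cauchy product. The only cosmetic difference is that you substitute $(-1)^{j}D_{j}/j!=1/(j+1)$ at the outset and reinstate $D_{j}$ at the end (in effect proving the paper's Corollary (\ref{2}) first), whereas the paper keeps $D_{j}$ symbolic throughout.
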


\begin{proof}
	By using (\ref{A1}) and (\ref{Dahee}), we get the following functional
	equation of generating functions: 
	\begin{equation}
	\mathcal{G}_{l}\left( t,u\right) =\frac{\mathcal{G}_{D}\left( -u\right) +t%
		\mathcal{G}_{D}\left( -ut\right) }{1+t-ut}
	\end{equation}%
	which yields 
	\begin{eqnarray*}
		\sum\limits_{n=0}^{\infty }\sum\limits_{k=0}^{n}\boldsymbol{l}\left(
		n,k\right) t^{k}u^{n} &=&\frac{1}{1+t}\sum\limits_{n=0}^{\infty }u^{n}\frac{%
			t^{n}}{\left( 1+t\right) ^{n}}\left( \sum\limits_{n=0}^{\infty }\frac{\left(
			-1\right) ^{n}D_{n}}{n!}u^{n}+t\sum\limits_{n=0}^{\infty }\frac{\left(
			-1\right) ^{n}D_{n}u^{n}}{n!}t^{n}\right).
	\end{eqnarray*}
Hence, we get
\begin{eqnarray*}
	\sum\limits_{n=0}^{\infty }\sum\limits_{k=0}^{n}\boldsymbol{l}\left(
	n,k\right) t^{k}u^{n} &=&\frac{1}{1+t}\sum\limits_{n=0}^{\infty }\sum\limits_{j=0}^{n}\left(
	-1\right) ^{j}\frac{D_{j}}{j!}\left( \frac{t}{1+t}\right) ^{n-j}u^{n} \\
	&&+\frac{t}{1+t}\sum\limits_{n=0}^{\infty }\sum\limits_{j=0}^{n}\left(
	-1\right) ^{j}\frac{D_{j}}{j!}t^{j}\left( \frac{t}{1+t}\right) ^{n-j}u^{n}.
\end{eqnarray*}
	Comparing the coefficients of $u^{n}$ on both sides of the above equation,
	we arrive at the desired result.
\end{proof}

Combining (\ref{1a}) with (\ref{D}), we have the following corollary:

\begin{corollary}
	\begin{equation}
	\sum\limits_{k=0}^{n}\boldsymbol{l}\left( n,k\right) t^{k}=\frac{1}{1+t}%
	\sum\limits_{j=0}^{n}\frac{1}{j+1}\left( \frac{t}{1+t}\right) ^{n-j}\left(
	1+t^{j+1}\right) .  \label{2}
	\end{equation}
\end{corollary}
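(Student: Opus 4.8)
The plan is to obtain (\ref{2}) as a direct specialization of the identity (\ref{1a}) proved just above, using the explicit evaluation (\ref{D}) of the Daehee numbers. Concretely, (\ref{1a}) reads
\begin{equation*}
\sum_{k=0}^{n}\boldsymbol{l}\left(n,k\right)t^{k}=\frac{1}{1+t}\sum_{j=0}^{n}\left(-1\right)^{j}\frac{D_{j}}{j!}\left(\frac{t}{1+t}\right)^{n-j}\left(1+t^{j+1}\right),
\end{equation*}
so it suffices to simplify, for each index $j$ with $0\le j\le n$, the scalar factor $\left(-1\right)^{j}D_{j}/j!$ appearing in front of the $j$-th summand.

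First I would substitute $D_{j}=\left(-1\right)^{j}j!/\left(j+1\right)$ from (\ref{D}). The factorial $j!$ cancels with the $1/j!$ already present, and the two sign factors combine as $\left(-1\right)^{j}\left(-1\right)^{j}=\left(-1\right)^{2j}=1$; hence $\left(-1\right)^{j}D_{j}/j!=1/\left(j+1\right)$. Inserting this value termwise into the right-hand side of (\ref{1a})---the summation range and the remaining factors $\left(\frac{t}{1+t}\right)^{n-j}\left(1+t^{j+1}\right)$ being untouched---yields exactly the right-hand side of (\ref{2}), and the corollary follows for every $n\in\mathbb{N}_{0}$.

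There is essentially no technical obstacle here; the only point that needs a moment's attention is the sign bookkeeping, namely checking that the $\left(-1\right)^{j}$ carried explicitly in (\ref{1a}) and the $\left(-1\right)^{j}$ hidden inside $D_{j}$ cancel rather than reinforce. As a sanity check one can read this off at the level of generating functions: from (\ref{Dahee}) one has $\mathcal{G}_{D}\left(-u\right)=-\log\left(1-u\right)/u=\sum_{j\ge 0}u^{j}/\left(j+1\right)$, while by definition $\mathcal{G}_{D}\left(-u\right)=\sum_{j\ge 0}\left(-1\right)^{j}\left(D_{j}/j!\right)u^{j}$; comparing coefficients of $u^{j}$ gives $\left(-1\right)^{j}D_{j}/j!=1/\left(j+1\right)$, which confirms the substitution used above.
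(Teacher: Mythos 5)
Your proposal is correct and follows exactly the paper's route: the paper derives (\ref{2}) by ``combining (\ref{1a}) with (\ref{D})'', which is precisely your termwise substitution of $D_{j}=(-1)^{j}j!/(j+1)$ and the cancellation $(-1)^{j}(-1)^{j}=1$. The generating-function sanity check is a nice extra but not needed.
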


Observe that the equation (\ref{2}) is equivalent to the equation (\ref%
{Th-ik1}).

In the next, by combining (\ref{2}) with (\ref{ExpLeib}), we also have the
following corollary:

\begin{corollary}
	\begin{equation}
	\sum\limits_{k=0}^{n}\frac{t^{k}}{\left( n+1\right) \binom{n}{k}}=\frac{1}{%
		1+t}\sum\limits_{j=0}^{n}\frac{1+t^{j+1}}{j+1}\left( \frac{t}{1+t}\right)
	^{n-j}.  \label{Cor-1}
	\end{equation}
\end{corollary}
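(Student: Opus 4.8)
The plan is to derive Corollary~\ref{Cor-1} as an immediate substitution, since it is nothing more than equation~(\ref{2}) rewritten after replacing the left-hand side by its closed form. Specifically, I would start from~(\ref{2}), which asserts
\[
\sum_{k=0}^{n}\boldsymbol{l}\left(n,k\right)t^{k}=\frac{1}{1+t}\sum_{j=0}^{n}\frac{1}{j+1}\left(\frac{t}{1+t}\right)^{n-j}\left(1+t^{j+1}\right),
\]
and then invoke the defining formula~(\ref{ExpLeib}), namely $\boldsymbol{l}\left(n,k\right)=\dfrac{1}{\left(n+1\right)\binom{n}{k}}$, to rewrite the left-hand side term by term.

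The single substantive step is therefore just replacing $\boldsymbol{l}\left(n,k\right)$ inside the sum $\sum_{k=0}^{n}\boldsymbol{l}\left(n,k\right)t^{k}$ by $\dfrac{1}{\left(n+1\right)\binom{n}{k}}$, which converts that sum into $\sum_{k=0}^{n}\dfrac{t^{k}}{\left(n+1\right)\binom{n}{k}}$; the right-hand side of~(\ref{2}) is carried over verbatim (reordering the factors $\dfrac{1}{j+1}$ and $1+t^{j+1}$ purely cosmetically to match the stated form $\dfrac{1+t^{j+1}}{j+1}$). Since~(\ref{ExpLeib}) is valid for $k=0,1,\ldots,n$ and $n\in\mathbb{N}_{0}$, which is exactly the index range appearing in~(\ref{2}), nothing further needs to be checked and the identity~(\ref{Cor-1}) follows.

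There is essentially no obstacle here: the result is a direct corollary in the strict sense, obtained by substituting one displayed identity into another. The only minor point worth a word of care is that the denominators $\binom{n}{k}$ and $j+1$ and the factor $1+t$ are all nonzero under the stated conventions (in particular one works formally in $t$ or restricts to $t$ with $1+t\neq 0$), so the manipulation is legitimate; but this is already implicit in the provenance of~(\ref{2}) from the generating-function identity~(\ref{A1}), and requires no separate argument.
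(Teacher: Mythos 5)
Your proof is correct and matches the paper exactly: the paper also obtains (\ref{Cor-1}) by combining (\ref{2}) with the closed form (\ref{ExpLeib}) for $\boldsymbol{l}\left(n,k\right)$, so the corollary is a direct term-by-term substitution. Nothing further is needed.
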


Substituting $t=1$ into (\ref{Cor-1}) yields the following result:

\begin{corollary}
	\begin{equation}
	\sum\limits_{k=0}^{n}\frac{1}{\left( n+1\right) \binom{n}{k}}%
	=\sum\limits_{j=0}^{n}\frac{2^{j-n}}{j+1}.  \label{3}
	\end{equation}
\end{corollary}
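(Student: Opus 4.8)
The plan is to obtain the identity directly by specializing the parameter $t$ in the preceding Corollary. Concretely, I would start from equation~(\ref{Cor-1}),
\[
\sum_{k=0}^{n}\frac{t^{k}}{(n+1)\binom{n}{k}}=\frac{1}{1+t}\sum_{j=0}^{n}\frac{1+t^{j+1}}{j+1}\left(\frac{t}{1+t}\right)^{n-j},
\]
which, after clearing the denominator $(1+t)^{n+1}$, is a polynomial identity in $t$ and therefore valid at $t=1$ as well; then I would simply set $t=1$ and simplify both sides.

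On the left-hand side every power $t^{k}$ becomes $1$, so the left-hand side collapses to $\sum_{k=0}^{n}\frac{1}{(n+1)\binom{n}{k}}$, exactly the quantity on the left of~(\ref{3}). On the right-hand side, at $t=1$ we have $1+t=2$, $\frac{t}{1+t}=\frac12$, and $1+t^{j+1}=2$ for every $j$; hence the right-hand side becomes
\[
\frac{1}{2}\sum_{j=0}^{n}\frac{2}{j+1}\left(\frac{1}{2}\right)^{n-j}=\sum_{j=0}^{n}\frac{2^{j-n}}{j+1},
\]
which is the right-hand side of~(\ref{3}). Comparing the two evaluations finishes the argument.

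I do not expect any genuine obstacle here; the only point requiring a word of justification is the legitimacy of the substitution $t=1$, i.e.\ that~(\ref{Cor-1}) has no pole at $t=1$, which is immediate. As an alternative route that avoids~(\ref{Cor-1}) altogether, one can combine the defining expression~(\ref{ExpLeib}), $\boldsymbol{l}(n,k)=\frac{1}{(n+1)\binom{n}{k}}$, with the evaluation~(\ref{Th-ik2}) of $\sum_{k=0}^{n}\boldsymbol{l}(n,k)$ to get $\sum_{k=0}^{n}\frac{1}{(n+1)\binom{n}{k}}=\sum_{k=0}^{n}\frac{1}{(n-k+1)2^{k}}$, and then reindex by $j=n-k$ to recognize the right-hand side as $\sum_{j=0}^{n}\frac{2^{j-n}}{j+1}$, yielding the same conclusion.
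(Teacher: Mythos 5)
Your proposal is correct and follows exactly the paper's own route: the paper obtains (\ref{3}) precisely by substituting $t=1$ into (\ref{Cor-1}), and your simplification of the right-hand side matches. Your alternative derivation via (\ref{Th-ik2}) is also consistent with the paper's subsequent remark that combining (\ref{ExpLeib}) with (\ref{3}) recovers (\ref{Th-ik2}).
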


Observe that the combination of (\ref{ExpLeib}) with equation (\ref{3}) is
equivalent to equation (\ref{Th-ik2}).

Combining (\ref{3}) with (\ref{C}), we have the following result:

\begin{theorem}
	\begin{equation}
	\sum\limits_{k=0}^{n}\frac{1}{\left( n+1\right) \binom{n}{k}}%
	=\sum\limits_{j=0}^{n}\frac{\left( -1\right) ^{n-j}\left( n-j\right) !}{%
		\left( j+1\right) Ch_{n-j}}.  \label{Th-ik3}
	\end{equation}
\end{theorem}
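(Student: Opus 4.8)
The plan is to obtain~(\ref{Th-ik3}) as an immediate consequence of the closed-form evaluation already established in Corollary~(\ref{3}), namely $\sum_{k=0}^{n}\frac{1}{(n+1)\binom{n}{k}}=\sum_{j=0}^{n}\frac{2^{j-n}}{j+1}$, by trading the powers of two appearing on its right-hand side for Changhee numbers via the explicit formula~(\ref{C}). So the whole task reduces to rewriting the single summand $\frac{2^{j-n}}{j+1}$ term by term.

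First I would invert~(\ref{C}): from $Ch_{m}=(-1)^{m}\frac{m!}{2^{m}}$ one solves for the power of two, which is legitimate since $Ch_{m}\neq 0$ for every $m\in\mathbb{N}_{0}$. Applying this with $m=n-j$ expresses $2^{j-n}=2^{-(n-j)}$ as a ratio built from $(n-j)!$, the sign $(-1)^{n-j}$, and $Ch_{n-j}$ (using $(-1)^{-(n-j)}=(-1)^{n-j}$). Substituting this expression into each term of the right-hand side of~(\ref{3}), summing over $j=0,1,\ldots,n$, and combining with~(\ref{3}) then gives the stated identity. An alternative route, should one prefer, is to pass through the relation~(\ref{R-CYn}) and the closed form~(\ref{YExp}) for $Y_{n}(-1)$, but this is a detour and the direct substitution is cleaner.

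The argument is essentially bookkeeping: there are no convergence concerns, since every sum here is a finite sum, and no reindexing beyond the substitution $m=n-j$. The one place that demands care is the algebra of the inversion of~(\ref{C}): keeping the sign factor $(-1)^{n-j}$ straight and making sure the factorial $(n-j)!$ and the Changhee factor $Ch_{n-j}$ land on the intended sides of the fraction before one identifies the result with the right-hand side of~(\ref{Th-ik3}). Thus the main obstacle is purely notational rather than conceptual.
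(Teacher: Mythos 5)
Your route is exactly the paper's: the paper's entire derivation of (\ref{Th-ik3}) is the one-line instruction ``Combining (\ref{3}) with (\ref{C})'', i.e.\ precisely the term-by-term trade of $2^{j-n}$ for Changhee numbers that you describe. So there is no methodological divergence. The problem is that you stop at the point you yourself flag as the delicate one --- ``making sure the factorial and the Changhee factor land on the intended sides of the fraction'' --- and that is exactly where the identification with the stated right-hand side breaks down. Carrying the inversion of (\ref{C}) to the end with $m=n-j$ gives
\begin{equation*}
2^{j-n}=2^{-(n-j)}=\frac{(-1)^{n-j}\,Ch_{n-j}}{(n-j)!},
\end{equation*}
so what this argument actually proves is
\begin{equation*}
\sum\limits_{k=0}^{n}\frac{1}{(n+1)\binom{n}{k}}=\sum\limits_{j=0}^{n}\frac{(-1)^{n-j}\,Ch_{n-j}}{(j+1)\,(n-j)!},
\end{equation*}
with $Ch_{n-j}$ in the numerator and $(n-j)!$ in the denominator. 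The right-hand side printed in (\ref{Th-ik3}) has these two factors interchanged; since $\frac{(-1)^{n-j}(n-j)!}{Ch_{n-j}}=2^{n-j}$ rather than $2^{j-n}$, the printed identity is equivalent to $\sum_{j}2^{n-j}/(j+1)$ and already fails at $n=1$ (left side $1$, printed right side $5/2$). So your plan is sound and coincides with the paper's, but a complete proof must finish the bookkeeping, at which point one sees that the target as stated cannot be reached and must be corrected to the displayed form above. Your alternative detour through (\ref{R-CYn}) and (\ref{YExp}) would of course lead to the same conclusion.
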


Substituting (\ref{R-CYn}) into (\ref{Th-ik3}), we arrive at the following
corollary:

\begin{corollary}
	\begin{equation*}
	\sum\limits_{k=0}^{n}\frac{1}{\left( n+1\right) \binom{n}{k}}%
	=-\sum\limits_{j=0}^{n}\frac{\left( n-j\right) !}{\left( j+1\right)
		Y_{n-j}\left( -1\right) }.
	\end{equation*}
\end{corollary}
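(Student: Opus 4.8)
The plan is to derive this corollary directly from the theorem containing equation (\ref{Th-ik3}) by the single substitution indicated, namely replacing the Changhee numbers in the denominator by the numbers $Y_{m}(\lambda)$ evaluated at $\lambda=-1$. First I would recall the starting identity
$$\sum_{k=0}^{n}\frac{1}{(n+1)\binom{n}{k}}=\sum_{j=0}^{n}\frac{(-1)^{n-j}(n-j)!}{(j+1)\,Ch_{n-j}},$$
and then apply the relation (\ref{R-CYn}), written with index $m=n-j$, in the form $Ch_{n-j}=(-1)^{n-j+1}Y_{n-j}(-1)$. Inserting this into each summand of the right-hand side is the whole content of the argument.

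The key step is the sign bookkeeping: in the generic summand the factor $(-1)^{n-j}$ in the numerator meets the factor $(-1)^{n-j+1}$ coming from $Ch_{n-j}$ in the denominator, and their quotient is $(-1)^{(n-j)-(n-j+1)}=(-1)^{-1}=-1$, independently of the parity of $n-j$. Hence each term becomes $-\dfrac{(n-j)!}{(j+1)\,Y_{n-j}(-1)}$, and pulling the common $-1$ out of the summation over $j$ produces precisely the claimed formula. No manipulation of the generating functions is needed here, since all the analytic work was already carried out in establishing (\ref{Th-ik3}).

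The only point that deserves an explicit (and very short) verification is that the substitution is legitimate, i.e. that $Y_{n-j}(-1)\neq 0$ so that division by it makes sense. This follows at once from the closed form (\ref{YExp}): setting $\lambda=-1$ gives $Y_{m}(-1)=-\,m!/2^{m}$, which is nonzero for every $m\in\mathbb{N}_{0}$ (and, as a consistency check, substituting this back into (\ref{R-CYn}) recovers $Ch_{m}=(-1)^{m}m!/2^{m}$, in agreement with (\ref{C})). I do not expect any genuine obstacle in this proof; it is a routine algebraic consequence of the preceding theorem together with the known relation (\ref{R-CYn}) between the Changhee numbers and the numbers $Y_{m}(\lambda)$.
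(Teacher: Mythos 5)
Your proposal is correct and follows exactly the paper's route: the corollary is obtained by substituting the relation $Ch_{n-j}=(-1)^{n-j+1}Y_{n-j}(-1)$ from (\ref{R-CYn}) into (\ref{Th-ik3}), and your sign bookkeeping giving the overall factor $-1$ is accurate. The added verification that $Y_{m}(-1)=-m!/2^{m}\neq 0$ is a sensible (if implicit in the paper) extra check.
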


Substituting $t=1$ into (\ref{A1}) and combining the final eqaution with (%
\ref{Dahee}), we get the following functional equation:

\begin{theorem}
	Let $n\in \mathbb{N}$. Then we have%
	\begin{equation}
	\sum\limits_{k=0}^{n}\boldsymbol{l}\left( n,k\right) -\frac{1}{2}%
	\sum\limits_{k=0}^{n-1}\boldsymbol{l}\left( n-1,k\right) =\frac{(-1)^{n}}{n!}%
	D_{n}.  \label{k1}
	\end{equation}
\end{theorem}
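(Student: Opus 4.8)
The plan is to specialize the bivariate generating function $\mathcal{G}_{l}(t,u)$ at $t=1$ and match the result against the Daehee generating function. First I would set $t=1$ in (\ref{A1}): the numerator collapses to $2\log(1-u)$, while the denominator becomes $(1-u)^{2}-1=u^{2}-2u=-u(2-u)$, so that
\begin{equation*}
\mathcal{G}_{l}(1,u)=\frac{2\log(1-u)}{-u(2-u)}.
\end{equation*}
Next, the definition (\ref{Dahee}) of the Daehee numbers gives $\mathcal{G}_{D}(-u)=\log(1-u)/(-u)$, i.e. $\log(1-u)=-u\,\mathcal{G}_{D}(-u)$. Substituting this cancels the spurious factor of $u$ in the denominator and produces the clean functional equation
\begin{equation*}
(2-u)\,\mathcal{G}_{l}(1,u)=2\,\mathcal{G}_{D}(-u).
\end{equation*}

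The remaining step is routine coefficient extraction. Using (\ref{GF-P}) I would write $\mathcal{G}_{l}(1,u)=\sum_{n\ge 0}L_{n}(1)u^{n}$ with $L_{n}(1)=\sum_{k=0}^{n}\boldsymbol{l}(n,k)$; then the left-hand side above has $u^{n}$-coefficient $2L_{n}(1)-L_{n-1}(1)$ for $n\ge 1$. On the right-hand side, (\ref{Dahee}) gives $2\mathcal{G}_{D}(-u)=2\sum_{n\ge 0}(-1)^{n}D_{n}u^{n}/n!$, whose $u^{n}$-coefficient is $2(-1)^{n}D_{n}/n!$. Equating the two and dividing by $2$ yields exactly (\ref{k1}).

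I do not expect a genuine obstacle here: the only point requiring attention is spotting the factorization $(1-u)^{2}-1=-u(2-u)$ and recognizing that the stray $u$ it introduces is precisely the $u$ appearing in the denominator of $\mathcal{G}_{D}$, so the two cancel and leave a rational prefactor in $u$; everything else is formal power series bookkeeping. One should just be slightly careful that the identity is asserted only for $n\in\mathbb{N}$, which is why the coefficient comparison is carried out at $u^{n}$ with $n\ge 1$ — the $n=0$ term would instead read $2L_{0}(1)=2D_{0}$, i.e. $2=2$, and does not involve the shifted sum.
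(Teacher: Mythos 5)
Your proposal is correct and follows essentially the same route as the paper: the functional equation $(2-u)\,\mathcal{G}_{l}(1,u)=2\,\mathcal{G}_{D}(-u)$ you derive is exactly the paper's equation $\mathcal{G}_{D}(-u)=(1-\tfrac{u}{2})\mathcal{G}_{l}(1,u)$ up to multiplication by $2$, and the subsequent coefficient extraction is identical. Your write-up actually verifies the functional equation in more detail than the paper, which simply asserts it.
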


\begin{proof}
	We set the following functional equation involving generating functions for
	the Leibnitz numbers and the Daehee numbers:%
	\begin{equation}
	\mathcal{G}_{D}\left( -u\right) =\left( 1-\frac{u}{2}\right) \mathcal{G}%
	_{l}\left( 1,u\right) .  \label{fe}
	\end{equation}%
	By using the above equation, we obtain%
	\begin{equation*}
	\sum\limits_{n=0}^{\infty }\sum\limits_{k=0}^{n}\boldsymbol{l}\left(
	n,k\right) u^{n}-\frac{1}{2}\sum\limits_{n=0}^{\infty }\sum\limits_{k=0}^{n}%
	\boldsymbol{l}\left( n,k\right) u^{n+1}=\sum\limits_{n=0}^{\infty
	}(-1)^{n}D_{n}\frac{u^{n}}{n!}.
	\end{equation*}%
	Therefore%
	\begin{equation*}
	1+\sum\limits_{n=1}^{\infty }\sum\limits_{k=0}^{n}\boldsymbol{l}\left(
	n,k\right) u^{n}-\frac{1}{2}\sum\limits_{n=1}^{\infty
	}\sum\limits_{k=0}^{n-1}\boldsymbol{l}\left( n-1,k\right)
	u^{n}=D_{0}+\sum\limits_{n=0}^{\infty }(-1)^{n}D_{n}\frac{u^{n}}{n!}.
	\end{equation*}%
	Comparing the coefficients of $u^{n}$ on both sides of the above equation
	yields the desired result.
\end{proof}

Combining (\ref{k1}) with (\ref{D}), we obtain the following result:

\begin{theorem}
	Let $n\in \mathbb{N}$. Then we have%
	\begin{equation*}
	\sum\limits_{k=0}^{n}\boldsymbol{l}\left( n,k\right) -\frac{1}{2}%
	\sum\limits_{k=0}^{n-1}\boldsymbol{l}\left( n-1,k\right) =\frac{1}{n+1}.
	\end{equation*}
\end{theorem}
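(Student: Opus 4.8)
The plan is to combine the identity (\ref{k1}) from the preceding theorem with the explicit evaluation (\ref{D}) of the Daehee numbers; no new analytic input is required. First I would recall that the previous theorem gives, for $n\in\mathbb{N}$,
\[
\sum_{k=0}^{n}\boldsymbol{l}\left( n,k\right) -\frac{1}{2}\sum_{k=0}^{n-1}\boldsymbol{l}\left( n-1,k\right) =\frac{(-1)^{n}}{n!}D_{n},
\]
an identity that was obtained by expanding the functional equation $\mathcal{G}_{D}\left( -u\right) =\left( 1-\tfrac{u}{2}\right) \mathcal{G}_{l}\left( 1,u\right)$ in (\ref{fe}) and comparing the coefficients of $u^{n}$.

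Next I would substitute the closed form $D_{n}=(-1)^{n}\dfrac{n!}{n+1}$ from (\ref{D}) into the right-hand side, obtaining
\[
\frac{(-1)^{n}}{n!}D_{n}=\frac{(-1)^{n}}{n!}\cdot (-1)^{n}\frac{n!}{n+1}=\frac{(-1)^{2n}}{n+1}=\frac{1}{n+1},
\]
where the last two equalities use $(-1)^{2n}=1$ and the cancellation of $n!$. Equating this with the left-hand side of (\ref{k1}) then yields the asserted identity, and this completes the argument.

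I do not expect any real obstacle here: all of the analytic work — the choice and verification of the functional equation (\ref{fe}) and the coefficient comparison — has already been done in the preceding theorem, so the present statement is just its reformulation once the explicit value of $D_{n}$ is inserted. The only point demanding (minimal) care is the sign bookkeeping, namely that the factor $(-1)^{n}$ produced by the Daehee evaluation exactly cancels the factor $(-1)^{n}$ already present in (\ref{k1}); this leaves the clean value $\tfrac{1}{n+1}$, consistent with the elementary observation $\boldsymbol{l}\left( n,0\right) =\tfrac{1}{n+1}$ noted earlier.
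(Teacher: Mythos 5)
Your proposal is correct and matches the paper's argument exactly: the paper obtains this result precisely by combining the identity (\ref{k1}) with the explicit value $D_{n}=(-1)^{n}\frac{n!}{n+1}$ from (\ref{D}), so that the factor $\frac{(-1)^{n}}{n!}$ cancels to leave $\frac{1}{n+1}$. No further comment is needed.
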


\begin{remark}
	By using (\ref{fe}), assuming that $\left\vert u\right\vert <1$, we obtain%
	\begin{equation*}
	\sum\limits_{n=0}^{\infty }\sum\limits_{k=0}^{n}\boldsymbol{l}\left(
	n,k\right) u^{n}=\sum\limits_{n=0}^{\infty }\frac{u^{n}}{n+1}%
	\sum\limits_{n=0}^{\infty }\frac{u^{n}}{2^{n}}.
	\end{equation*}%
	Therefore%
	\begin{equation*}
	\sum\limits_{n=0}^{\infty }\sum\limits_{k=0}^{n}\boldsymbol{l}\left(
	n,k\right) u^{n}=\sum\limits_{n=0}^{\infty }\sum\limits_{k=0}^{n}\frac{%
		2^{k-n}}{k+1}u^{n}.
	\end{equation*}%
	Comparing the coefficients of $u^{n}$ on both sides of the above equation
	yields the Eq.-\textup{(\ref{3})}.
\end{remark}

\section{Generalized Leibnitz numbers}

Here, we give further remarks and observations on the Leibnitz numbers and
their relations with finite sums derived from application of the integral to
the modification for the Bernstein basis functions. By using the following
well-known beta functions:%
\begin{equation*}
B\left( \alpha ,\beta \right) =\int\limits_{0}^{1}t^{\alpha -1}\left(
1-t\right) ^{\beta -1}dt=B\left( \beta ,\alpha \right)
\end{equation*}%
where $\operatorname{Re}\left( \alpha \right) >0$ and $\operatorname{Re}\left( \beta \right)>0$ (\textit{cf}. \cite{raiville}, \cite{SrivatavaChoi}), one has the
following novel integral formula:%
\begin{equation}
\int\limits_{a}^{b}\left( x-a\right) ^{\alpha -1}\left( b-x\right) ^{\beta
	-1}dx=\left( b-a\right) ^{\alpha +\beta -1}B\left( \alpha ,\beta \right)
\label{IK}
\end{equation}%
(\textit{cf}. \cite[p.10, Eq. (69)]{SrivatavaChoi}).

Let $x\in \left[ a,b\right] $ and $k=0,1,2,\ldots ,n$. By combining (\ref{IK}%
) with the modification for the Bernstein basis functions is given as
follows:%
\begin{equation*}
B_{k}^{n}\left( x;a,b\right) =\binom{n}{k}\left( \frac{x-a}{b-a}\right)
^{k}\left( \frac{b-x}{b-a}\right) ^{n-k}\text{;\ \ \ }a<b
\end{equation*}%
where (\textit{cf}. \cite{Goldman}, \cite{SimsekCatalan}; see references
therein), we have%
\begin{equation*}
\int\limits_{a}^{b}B_{k}^{n}\left( x;a,b\right) dx=\binom{n}{k}%
\sum\limits_{j=0}^{k}\sum\limits_{l=0}^{n-k}\left( -1\right) ^{n-j-l}\binom{k%
}{j}\binom{n-k}{l}\frac{a^{k-j}b^{n+j-k+1}-a^{n-l+1}b^{l}}{n+j-k-l+1}
\end{equation*}%
Thus, by the above integral representation, we also get%
\begin{eqnarray}
&&\sum\limits_{j=0}^{k}\sum\limits_{v=0}^{n-k}\left( -1\right) ^{n-j-v}%
\binom{k}{j}\binom{n-k}{v}\frac{a^{k-j}b^{n+j-k+1}-a^{n-v+1}b^{v}}{n+j-k-v+1}
\label{GLNum} \\
&=&\left( b-a\right) B\left( k+1,n-k+1\right),  \notag
\end{eqnarray}
(\textit{cf}. \cite[Eq. (28)]{SimsekCatalan}).

By using (\ref{GLNum}), it is time to introduce generalized Leibnitz numbers
by the following definition:

\begin{definition}
	Let $a$ and $b$ be nonnegative real parameters with $a\neq b$. Let $%
	k=0,1,2,\ldots ,n$ and $n\in\mathbb{N}_{0}$. Then, the generalized Leibnitz
	numbers $\mathcal{L}\left( n,k;a,b\right)$ are defined by 
	\begin{equation}
	\mathcal{L}\left( n,k;a,b\right)
	=\sum\limits_{j=0}^{k}\sum\limits_{v=0}^{n-k}\left( -1\right) ^{n-j-v}\binom{%
		k}{j}\binom{n-k}{v}\frac{a^{k-j}b^{n+j-k+1}-a^{n-v+1}b^{v}}{n+j-k-v+1}.
	\label{GLNumDef}
	\end{equation}
\end{definition}

Note that in (\ref{GLNumDef}) we assume that $0^0=1$.

\begin{remark}
	Setting $a=0$ and $b=1$ in \textup{(\ref{GLNumDef})} and using \textup{(\ref%
		{ExpLeib})} yields 
	\begin{equation*}
	\frac{1}{\left(n+1\right)\binom{n}{k}} =\sum\limits_{v=0}^{n-k}\left(
	-1\right) ^{n-k-v}\binom{n-k}{v}\frac{1}{n-v+1},
	\end{equation*}
	(\textit{cf}. \cite{SimsekBoundary, SimsekHacettepe, SimsekAMC2015}, \cite[%
	Eq. (29)]{SimsekCatalan}), which is equivalent to \textup{(\ref{SumLeib})}.
	
	Observe that further identities and new number families may be discovered by
	using the methods in paper \cite{SimsekHacettepe, SimsekBoundary,
		SimsekAMC2015,SimsekCatalan}, of the generalized Leibnitz numbers, which
	seem to be closely related to the modification of the Bernstein basis
	functions.
\end{remark}

\textbf{Open Problems:}

\begin{itemize}
	\item How can we constuct generating functions for the (generalized)
	Leibnitz numbers?
	
	\item Is there any zeta-type functions, on the set of complex numbers, which
	interpolates the (generalized) Leibnitz numbers?
\end{itemize}

\section{Mathematica implementation of the numbers $\mathcal{L}\left(
	n,k;a,b\right) $}

In this section, in order to give some applications of the computation formulas given in the previous sections, we present Mathematica implementation (see: Implementation \ref{CodeGLeibnitzNum}) for the numbers $\mathcal{L}\left(
n,k;a,b\right) $ by coding (\ref{GLNumDef}) in Mathematica 12.0.

\begin{lstlisting}[language=Mathematica, label=CodeGLeibnitzNum, caption={The following Mathematica code returns the values of the generalized Leibnitz numbers $\mathcal{L}\left( n,k;a,b\right)$}.]
Unprotect[Power];
Power[0,0]=1;
Protect[Power];
GLeibnitzNum[n_,k_,a_,b_]:=Sum[Sum[((-1)^(n-j-v))*Binomial[k,j]*Binomial[n-k,v]*((a^(k-j))*(b^(n+j-k+1))-(a^(n-v+1))(b^v))/(n+j-k-v+1), {v,0,n-k}], {j,0,k}]
\end{lstlisting}

Then, by (\ref{CodeGLeibnitzNum}), we compute few values of the numbers 
$\mathcal{L}\left( n,k;a,b\right) $, and give their tables as follows:

\begin{table}[H]
	\caption{\textmd{For $k=1$ and $n \in \{0, 1,2,3,4\}$, few values of the
			generalized Leibnitz numbers $\mathcal{L}\left( n,k;a,b\right)$ .}}
	\label{TableGLeibnitzNum1}\includegraphics[width=%
	\textwidth]{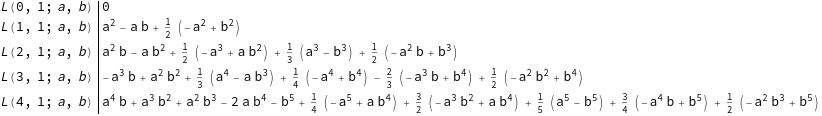}
\end{table}

\begin{table}[H]
	\caption{\textmd{For $k=2$ and $n \in \{0, 1,2,3,4\}$, few values of the
			generalized Leibnitz numbers $\mathcal{L}\left( n,k;a,b\right)$ .}}
	\label{TableGLeibnitzNum2}\includegraphics[width=%
	\textwidth]{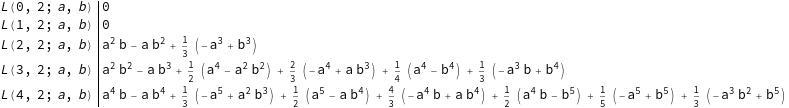}
\end{table}

\begin{table}[H]
	\caption{\textmd{For $n \in \{0,1,2,3\}$ and $k \in \{0,1,2\}$, few values
			of the generalized Leibnitz numbers $\mathcal{L}\left( n,k;a,b\right)$.}}
	\label{MatrixTableGLeibnitzNum1}%
	\includegraphics[scale=0.5]{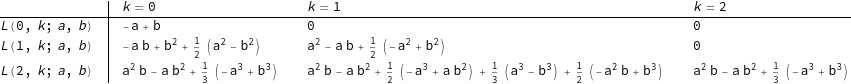}
\end{table}

Substituting $a=0$ and $b=1$, the numbers 
$\mathcal{L}\left( n,k;a,b\right) $ are reduced to the classical Leibnitz numbers $\boldsymbol{l}\left( n,k\right)$.

\begin{table}[H]
	\caption{\textmd{For $n \in \{0,1,2,\dots,8\}$ and $k \in \{0,1,2,\dots,8\}$, few values
			of the generalized Leibnitz numbers $\mathcal{L}\left( n,k;0,1\right)$, namely $\boldsymbol{l}\left( n,k\right)$.}}
	\label{MatrixTableLeibnitzNum}%
	\includegraphics[scale=0.5]{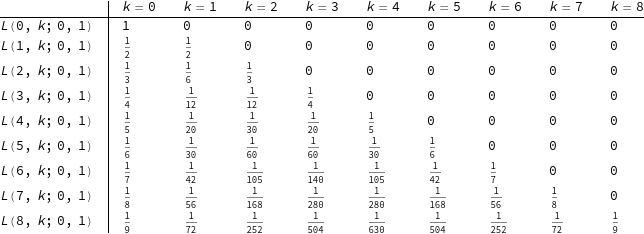}
\end{table}

\section{The Volkenborn integral representation of the Leibnitz numbers on the set of $p$-adic integers}

In this section, by applying Volkenborn integral on the set of $p$-adic integers not only to the Mahler coefficients, but also to uniformly differential function on the set of $p$-adic integers, we obtain some novel formulas involving the Leibnitz numbers.

Here, we follow notations of the following references: \cite{KimV,
	Kim2006TMIC, Schikof, SimsekMTJPM}; and the references cited therein.

Some notations and definitions for $p$-adic integrals are given as follows:

Let $m\in \mathbb{N}$. Let $ord_{p}(m)$ denote the greatest integer $k$ ($%
k\in \mathbb{N}_{0})$ such that $p^{k}$ divides $m$ in $\mathbb{Z}$. If $m=0$%
, then $ord_{p}(m)=\infty $.

$\left\vert .\right\vert _{p}$ is a norm on $\mathbb{Q}$. This norm is given
by%
\begin{equation*}
\left\vert x\right\vert _{p}=\left\{ 
\begin{array}{cc}
p^{-ord_{p}(x)} & \text{if }x\neq 0, \\ 
0 & \text{if }x=0.%
\end{array}%
\right.
\end{equation*}%
Let $\mathbb{Z}_{p}$ be a set of $p$-adic integers which is given by%
\begin{equation*}
\mathbb{Z}_{p}=\left\{ x\in \mathbb{Q}_{p}:\left\vert x\right\vert _{p}\leq
1\right\} .
\end{equation*}

Let $f$ define on $\mathbb{Z}_{p}$. The function $f$ is called a uniformly
differential function at a point$\ a\in \mathbb{Z}_{p}$ if $f$ satisfies the
following conditions:

If the difference quotients $\Phi _{f}:\mathbb{Z}_{p}\times \mathbb{Z}%
_{p}\rightarrow \mathbb{C}_{p}$ such that%
\begin{equation*}
\Phi _{f}(x,y)=\frac{f(x)-f(y)}{x-y}
\end{equation*}%
have a limit $f^{\prime }(z)$ as $(x,y)\rightarrow (0,0)$ ($x$ and $y$
remaining distinct). A set of uniformly differential functions is briefly
indicated by $f\in UD(\mathbb{Z}_{p})$ or $f\in C^{1}(\mathbb{Z}%
_{p}\rightarrow \mathbb{C}_{p}\mathbb{)}$.

The well-known Volkenborn integral (bosonic $p$-integral) is given by%
\begin{equation*}
\int\limits_{\mathbb{Z}_{p}}f\left( x\right) d\mu _{1}\left( x\right) =%
\underset{N\rightarrow \infty }{\lim }\frac{1}{p^{N}}\sum_{x=0}^{p^{N}-1}f%
\left( x\right) ,
\end{equation*}%
where denotes the Haar distribution, which is defined by%
\begin{equation*}
\mu _{1}\left( x\right) =\mu _{1}\left( x+p^{N}\mathbb{Z}_{p}\right) =\frac{1%
}{p^{N}}
\end{equation*}%
(\textit{cf}. \cite{Jangsimsek}-\cite{KimChange}, \cite{Schikof}; see also
the references cited in each of these earlier works).

In order to achieve the results of this section, we let
\begin{equation}
f\left(x\right) =\sum\limits_{n=0}^{\infty }a_{n}\binom{x}{n}\in C^{1}(%
\mathbb{Z}_{p}\rightarrow \mathbb{K)},
\label{Mahler1}
\end{equation}%
where $\binom{x}{n}=\frac{x_{(n)}}{n!}$ denotes the Mahler coefficients. Applying
the Volkenborn integral to the function $f\left(x\right)$ in terms of the
Mahler coefficients $\binom{x}{n}$, we have the following well-known formula:%
\begin{equation}
\int\limits_{\mathbb{Z}_{p}}f\left( x\right) d\mu _{1}\left( x\right)
=\sum\limits_{n=0}^{\infty }\frac{(-1)^{n}}{n+1}a_{n},
\label{Mahler2}
\end{equation}%
(\textit{cf}. \cite[p. 168-Proposition 55.3]{Schikof}).

In order to give generating function (\ref{A1}), we apply the Volkenborn
integral to the following uniformly differential function on $%
\mathbb{Z}_{p}$:

\begin{equation*}
f\left( x,u;t\right) =\left( 1-u\right) ^{x}\left( 1-tu\right) ^{x},
\end{equation*}%
where $u$, $x\in \mathbb{Z}_{p}$.

Substituting the above function into the following well-known integral
equation:%
\begin{equation*}
\int\limits_{\mathbb{Z}_{p}}f(x+1)d\mu _{1}\left( x\right) =\int\limits_{%
	\mathbb{Z}_{p}}f(x)d\mu _{1}\left( x\right) +f^{\prime }(k)
\end{equation*}%
(\textit{cf}. \cite{Schikof}), we get%
\begin{equation}
\int\limits_{%
	\mathbb{Z}
	_{p}}\left( 1-u\right) ^{x}\left( 1-tu\right) ^{x}d\mu _{1}\left( x\right) =%
\frac{\log \left[ \left( 1-u\right) \left( 1-tu\right) \right] }{\left(
	1-u\right) \left( 1-tu\right) -1}.  \label{A0}
\end{equation}

Consequently, the function on the right of equation (\ref{A0}) gives the
generating function given in equation (\ref{A1}) for the Leibnitz numbers.

Combining (\ref{A0}) with the binomial series
\begin{equation*}
\sum\limits_{n=0}^{\infty }x_{(n)}\frac{u^{n}}{n!}=\left( 1+u\right) ^{x},
\end{equation*}
and using (\ref{Mahler2}), we obtain%
\begin{equation*}
\sum\limits_{n=0}^{\infty }(-1)^{n}\frac{u^{n}}{n!}\sum\limits_{k=0}^{n}%
\binom{n}{k}t^{n-k}\int\limits_{\mathbb{Z}_{p}}x_{(k)}x_{(n-k)}d\mu
_{1}\left( x\right) =\sum\limits_{n=0}^{\infty }\sum\limits_{k=0}^{n}%
\boldsymbol{l}\left( n,k\right) t^{k}u^{n}.
\end{equation*}%
Comparing the coefficients of $u^{n}$ on both sides of the above equation
yields the following theorem:

\begin{theorem}
	Let $n\in \mathbb{N}_{0}$. Then we have%
	\begin{equation}
	\frac{(-1)^{n}}{n!}\sum\limits_{k=0}^{n}\binom{n}{k}t^{n-k}\int\limits_{%
		\mathbb{Z}_{p}}x_{(k)}x_{(n-k)}d\mu _{1}\left( x\right)
	=\sum\limits_{k=0}^{n}\boldsymbol{l}\left( n,k\right) t^{k}.  \label{a11}
	\end{equation}
\end{theorem}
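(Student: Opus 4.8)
The plan is to expand the integrand on the left of \textup{(\ref{A0})} as a power series in $u$ whose coefficients are polynomials in $x$, apply the Volkenborn integral termwise, and then identify the resulting series by means of \textup{(\ref{A1})} and \textup{(\ref{GF-P})}.

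First I would start from the $p$-adic binomial series $\sum_{n=0}^{\infty}x_{(n)}\frac{u^{n}}{n!}=(1+u)^{x}$ recalled just before the statement, and substitute $u\mapsto -u$ and $u\mapsto -tu$ to get $(1-u)^{x}=\sum_{k=0}^{\infty}x_{(k)}\frac{(-u)^{k}}{k!}$ and $(1-tu)^{x}=\sum_{j=0}^{\infty}x_{(j)}\frac{(-tu)^{j}}{j!}$. Taking the Cauchy product of these two series and collecting the coefficient of $u^{n}$ (set $j=n-k$, and use $\frac{1}{k!\,(n-k)!}=\frac{1}{n!}\binom{n}{k}$) gives
\[
(1-u)^{x}(1-tu)^{x}=\sum_{n=0}^{\infty}\frac{(-1)^{n}u^{n}}{n!}\sum_{k=0}^{n}\binom{n}{k}t^{n-k}x_{(k)}x_{(n-k)}.
\]

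Next I would apply $\int_{\mathbb{Z}_{p}}\,\cdot\,d\mu _{1}(x)$ to both sides. Each coefficient $x\mapsto x_{(k)}x_{(n-k)}$ is a polynomial, hence belongs to $C^{1}(\mathbb{Z}_{p}\rightarrow \mathbb{C}_{p})$, so $\int_{\mathbb{Z}_{p}}x_{(k)}x_{(n-k)}d\mu _{1}(x)$ is a well-defined scalar and the integral may be moved inside the sum over $n$. By \textup{(\ref{A0})} the left-hand side then equals $\mathcal{G}_{l}(t,u)$, while by \textup{(\ref{A1})} together with \textup{(\ref{GF-P})} one has $\mathcal{G}_{l}(t,u)=\sum_{n=0}^{\infty}\left(\sum_{k=0}^{n}\boldsymbol{l}(n,k)t^{k}\right)u^{n}$. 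Comparing the coefficients of $u^{n}$ on the two sides yields exactly \textup{(\ref{a11})}.

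The only genuinely delicate step is justifying the interchange of the Volkenborn integral with the infinite sum in $u$. The cleanest way around this is to read the entire computation as an identity between formal power series in $u$: the Volkenborn integral enters only coefficientwise, through the numbers $\int_{\mathbb{Z}_{p}}x_{(k)}x_{(n-k)}d\mu _{1}(x)$, one per power of $u$, so no convergence in $u$ is needed and \textup{(\ref{A0})}, \textup{(\ref{A1})} may be used as the formal identities they are. If instead one wishes to argue analytically, it suffices to restrict to $u$ with $|u|_{p}$ and $|tu|_{p}$ small enough that the binomial series converges uniformly on $\mathbb{Z}_{p}$, and then invoke the continuity of the Volkenborn integral on $C^{1}(\mathbb{Z}_{p})$. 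Everything else --- the Cauchy product, the reindexing, and the binomial identity --- is routine bookkeeping.
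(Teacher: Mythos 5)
Your proposal is correct and follows essentially the same route as the paper: expand $\left(1-u\right)^{x}\left(1-tu\right)^{x}$ via the binomial series, integrate termwise over $\mathbb{Z}_{p}$, identify the result with $\mathcal{G}_{l}\left(t,u\right)$ through \textup{(\ref{A0})} and \textup{(\ref{A1})}, and compare coefficients of $u^{n}$. The only cosmetic difference is that the paper justifies the termwise integration by appealing to the Mahler expansion formula \textup{(\ref{Mahler2})}, whereas you argue via formal power series (or uniform convergence); both are adequate.
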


Combining (\ref{a11}) with the following formula:%
\begin{equation*}
\int\limits_{\mathbb{Z}_{p}}x_{(n)}x_{(m)}d\mu _{1}\left( x\right)
=\sum_{k=0}^{m}(-1)^{m+n-k}\binom{n}{k}\binom{m}{k}\frac{k!(n+m-k)!}{n+m-k+1}
\end{equation*}%
where $m,n\in \mathbb{N}_{0}$ (\textit{cf}. \cite{SimsekArxive}), we arrive at the
following theorem:

\begin{theorem}
	Let $n\in \mathbb{N}_{0}$. Then we have%
	\begin{equation}
	\sum\limits_{k=0}^{n}\boldsymbol{l}\left( n,k\right) t^{k}=\frac{1}{n!}
	\sum_{k=0}^{n}\sum_{j=0}^{n-k}\left(-1\right)^j \binom{n}{k} \binom{n-k}{j}\binom{k}{j} \frac{j!\left(n-j\right)!}{n-j+1}t^{n-k}. \label{ki1}
	\end{equation}
\end{theorem}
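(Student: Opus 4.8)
The plan is to combine the two ingredients that immediately precede the statement: the Volkenborn–integral representation (\ref{a11}) of the generating polynomial $\sum_{k=0}^{n}\boldsymbol{l}(n,k)t^{k}$, and the explicit evaluation of $\int_{\mathbb{Z}_{p}}x_{(n)}x_{(m)}\,d\mu_{1}(x)$ for a product of two falling factorials. The statement is essentially a direct corollary; the only work is careful bookkeeping of indices and signs.

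First I would specialize the product formula $\int_{\mathbb{Z}_{p}}x_{(n)}x_{(m)}\,d\mu_{1}(x)=\sum_{k=0}^{m}(-1)^{m+n-k}\binom{n}{k}\binom{m}{k}\frac{k!(n+m-k)!}{n+m-k+1}$ to the parameter pair appearing in (\ref{a11}), namely replacing $n$ by $k$ and $m$ by $n-k$, and renaming the inner summation variable to $j$ so it does not collide with the outer index $k$. Using $(n-k)+k-j=n-j$ in the exponent of $-1$ and $k+(n-k)-j=n-j$ in the factorials and the denominator, this gives
\[
\int_{\mathbb{Z}_{p}}x_{(k)}x_{(n-k)}\,d\mu_{1}(x)=\sum_{j=0}^{n-k}(-1)^{\,n-j}\binom{k}{j}\binom{n-k}{j}\frac{j!\,(n-j)!}{n-j+1}.
\]

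Next I would insert this expression into (\ref{a11}), interchange the order so the double sum is displayed, and push the prefactor $\frac{(-1)^{n}}{n!}$ through. The sign combination is $(-1)^{n}\cdot(-1)^{\,n-j}=(-1)^{2n-j}=(-1)^{j}$, so the overall factor $(-1)^{n}$ disappears and an $(-1)^{j}$ survives inside the sum. Collecting the binomial factors as $\binom{n}{k}\binom{n-k}{j}\binom{k}{j}$ and retaining the $t^{n-k}$ already present in (\ref{a11}), one reads off precisely the right-hand side of (\ref{ki1}).

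I do not anticipate a real obstacle: the chief point requiring care is the index relabelling in the product formula (keeping the inner variable distinct from $k$) and the sign identity $(-1)^{n}(-1)^{\,n-j}=(-1)^{j}$. As an independent sanity check one could evaluate both sides of (\ref{ki1}) for small $n$ and compare with (\ref{Th-ik1}) or with the defining sum (\ref{SumLeib}).
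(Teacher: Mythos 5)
Your proposal is correct and is precisely the paper's own argument: the paper derives (\ref{ki1}) by combining (\ref{a11}) with the falling-factorial product formula for the Volkenborn integral, exactly as you do, and your index substitution $n\to k$, $m\to n-k$ together with the sign simplification $(-1)^{n}(-1)^{n-j}=(-1)^{j}$ is the whole content of the step. You have in fact supplied more detail than the paper, which states the combination without writing out the bookkeeping.
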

Substituting $t=1$ into (\ref{ki1}), we get the following corollary:
\begin{corollary}
	\begin{equation}
	\sum\limits_{k=0}^{n}\boldsymbol{l}\left( n,k\right) =\frac{1}{n!}
	\sum_{k=0}^{n}\sum_{j=0}^{n-k}\left(-1\right)^j \binom{n}{k} \binom{n-k}{j}\binom{k}{j} \frac{j!\left(n-j\right)!}{n-j+1}. \label{ki2}
	\end{equation}
\end{corollary}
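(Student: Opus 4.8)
The plan is to obtain \eqref{ki2} as the $t=1$ specialization of the immediately preceding Theorem, equation \eqref{ki1}. Since \eqref{ki1} has already been established, almost all the work is done: setting $t=1$ collapses the factor $t^{n-k}$ on the right to $1$ and turns the left-hand side $\sum_{k=0}^{n}\boldsymbol{l}(n,k)t^{k}$ into the plain sum $\sum_{k=0}^{n}\boldsymbol{l}(n,k)$, which is exactly $L_{n}(1)$ in the notation of the introduction. Thus \eqref{ki2} follows at once, and this is the route I would actually write up.

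As an independent check — and an alternative derivation that does not quote \eqref{ki1} — I would redo the computation directly from \eqref{a11} with $t=1$, which gives
\[
\sum_{k=0}^{n}\boldsymbol{l}(n,k)=\frac{(-1)^{n}}{n!}\sum_{k=0}^{n}\binom{n}{k}\int_{\mathbb{Z}_{p}}x_{(k)}x_{(n-k)}\,d\mu_{1}(x),
\]
and into this I would substitute the quoted evaluation of $\int_{\mathbb{Z}_{p}}x_{(m)}x_{(n)}\,d\mu_{1}(x)$ with first argument $k$, second argument $n-k$, and summation variable $j$:
\[
\int_{\mathbb{Z}_{p}}x_{(k)}x_{(n-k)}\,d\mu_{1}(x)=\sum_{j=0}^{n-k}(-1)^{n-j}\binom{k}{j}\binom{n-k}{j}\frac{j!\,(n-j)!}{n-j+1}.
\]
Multiplying by $\frac{(-1)^{n}}{n!}\binom{n}{k}$, summing over $k$, and using $(-1)^{n}(-1)^{n-j}=(-1)^{j}$ reproduces the right-hand side of \eqref{ki2} verbatim. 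Note that $\binom{k}{j}\binom{n-k}{j}$ already vanishes for $j>\min(k,n-k)$, so taking the upper limit of the $j$-sum to be $n-k$ (rather than $\min(k,n-k)$) is harmless.

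There is no genuine obstacle here; the only point requiring care in the alternative derivation is the sign bookkeeping — specifically, checking that the outer $(-1)^{n}$ from \eqref{a11} combines with the exponent $(n-k)+k-j=n-j$ coming from the moment formula to leave exactly $(-1)^{j}$. As a sanity check one can compare \eqref{ki2} against the much simpler closed form $\sum_{k=0}^{n}\boldsymbol{l}(n,k)=\sum_{k=0}^{n}\frac{1}{(n-k+1)2^{k}}$ in \eqref{Th-ik2} for small $n$ (say $n=0,1,2$), which pins down the constants and confirms there is no stray sign or factorial.
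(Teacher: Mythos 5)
Your primary argument — substituting $t=1$ into \eqref{ki1} — is exactly the paper's proof, which consists of the single sentence ``Substituting $t=1$ into \eqref{ki1}, we get the following corollary.'' Your additional independent derivation from \eqref{a11} and the moment formula is a correct but unnecessary bonus; the main route is identical to the paper's.
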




\end{document}